\newcommand{\C}{{\mathbb{C}}}
\newcommand{\F}{{\mathbb{F}}}
\newcommand{\Ha}{\mathbb{H}}
\newcommand{\Q}{{\mathbb{Q}}}
\newcommand{\oQ}{\overline{\Q}}
\newcommand{\R}{{\mathbb{R}}}
\newcommand{\Z}{{\mathbb{Z}}}
\newcommand{\id}{\mathrm{id}}
\newcommand{\ind}{\mathrm{ind}}
\newcommand{\ord}{\mathrm{ord}}
\newcommand{\spec}{\mathrm{spec}\,}
\newcommand{\End}{\mathrm{End}\,}
\newcommand{\Ext}{\mathrm{Ext}}
\newcommand{\Hom}{\mathrm{Hom}}
\newcommand{\Kt}{\mathrm{Kt}}
\newcommand{\oalpha}{\overline{\alpha}}
\newcommand{\Res}{\mathrm{Res}}
\newcommand{\RRe}{\mathrm{Re}\,}
\newcommand{\tr}{\mathrm{tr}}
\newcommand{\Ch}{{\mathcal C}}
\newcommand{\Rh}{{\mathcal R}}
\newcommand{\eo}{\mathfrak{o}}
\newcommand{\oV}{\overline{V}}
\newcommand{\oF}{\overline{\mathbb{F}}}
\newcommand{\silo}{\xrightarrow{\sim}}
\newcommand{\hullet}{{\scriptscriptstyle \bullet\,}}
\newcommand{\verk}{\mbox{\scriptsize $\,\circ\,$}}
\newcommand{\halb}{\frac{1}{2}}
\newtheorem{theorem}{Theorem}[section]
\newtheorem{punkt}[theorem]{$\!\!$}
\begin{document}
\title{There is no ``Weil-''cohomology theory with {\it real} coefficients for arithmetic curves}
\author{Christopher Deninger\footnote{Funded by the Deutsche Forschungsgemeinschaft (DFG, German Research Foundation) under Germany's Excellence Strategy EXC 2044--390685587, Mathematics M\"unster: Dynamics--Geometry--Structure and the CRC 1442 Geometry: Deformations and Rigidity} }
\date{\ }
\maketitle
\section{Weil cohomologies over $\oF_p$} \label{sec:1}
A well known argument by Serre shows that there is no Weil cohomology theory with real coefficients for smooth projective varieties over $\oF_p$. For such a cohomology theory and every elliptic curve $E$ over $\oF_p$ there would have to be an $\R$-algebra anti-homomorphism
\begin{punkt} \quad 
\label{1.1}
$(\End E) \otimes \R \longrightarrow \End H^1 (E , \R) \; ,$
\end{punkt}
However, if $E$ is supersingular, then $(\End E) \otimes \Q$ is a quaternion algebra over $\Q$ which is non-split precisely at $p$ and $\infty$. Thus $(\End E) \otimes \R$ would be the Hamilton quaternions $\Ha$ whereas $\End H^1 (E , \R) = M_2 (\R)$, and there is no anti-homomorphism of $\R$-algebras $\Ha \to M_2 (\R)$. The algebras $(\End E) \otimes \Q_l$ for $l \neq p$ and $(\End E) \otimes Q_p$ are split, where $Q_p$ is the quotient field of $W (\oF_p)$, the ($p$-typical) Witt vector ring of $\oF_p$. Hence Serre's argument does not contradict the existence of Weil cohomology theories with values in $\Q_l$ resp. $Q_p$. Indeed $l$-adic and crystalline cohomology are such theories. In particular, embedding $\Q_l$ or $Q_p$ into $\C$ one sees that there is a Weil cohomology with values in $\C$-vector spaces. Of course such embeddings are highly uncanonical. However, if the standard conjectures on algebraic cycles hold, there is a Weil-cohomology theory with values in $\oQ$. It would give a theory with complex coefficients after choosing an embedding $\oQ \hookrightarrow \C$, which is a more reasonable process than embedding $\Q_l$ or $Q_p$ into $\C$. 

In Scholze \cite[Conjecture 9.5 ff]{Sch} the existence of a Weil cohomology theory $H^n_{Kt_{\Q}} (X)$ for varieties $X$ over $\oF_p$ with values in Kottwitz' category $\Kt_{\Q}$ is conjectured, and assuming well known conjectures also proved. As yet, we have no indication that such a refined category as (the $\ind$-category of $\Kt_{\Q}$) may be the target of our conjectural ``Weil-''cohomology $H^n (\overline{\spec \eo_k} , \Ch)$ for arithmetic curves in the next section. However, the following much simpler $\R$-linear abelian $\otimes$-category $K_{\R}$ may be a target category. Its objects are $\Z / 2$-graded $\C$-vector spaces $V = V_0 \oplus V_1$ with an antilinear isomorphism $\sigma$ respecting the grading and $\sigma^2 = (-1)^i$ on $V_i$. Morphisms are $\C$-linear maps which respect the grading and commute with $\sigma$. On $V_0$ the involution $\sigma$ defines the real structure $V^{\sigma = \id}_0$, i.e. $V_0 = V^{\sigma = \id}_0 \otimes_{\R} \C$. On $V_1$ we obtain a quaternionic structure: The quaternions $\Ha = \R \oplus \R i \oplus \R j \oplus \R k$ act on $V_1$ by $1 , i , j = \sigma$ and $k = i \sigma$, turning $V_1$ into an $\Ha$-vector space. Thus Serre's argument would not exclude a Weil cohomology theory $H^n$ with values in $K_{\R}$ if $H^1 (E) = H^1 (E)_1$ for supersingular elliptic curves. In fact, as Milne pointed out to me, it follows directly from Grothendieck's standard conjectures that there exists a Weil cohomology with values in $K_{\R}$ - the conjectures imply that the motives over $\oF_p$  form a polarized Tannakian category, to which one can apply \cite[Theorem 4.29]{DM}. He also remarked that this was likely known to Grothendieck in the 1960s. Note that there are canonical $\otimes$-functors $\Kt_{\Q} \to \Kt_{\R} \to K_{\R}$, where Kottwitz' category $\Kt_{\R}$ is the same as the category $V_{\infty}$ in \cite{M}, Example 1.7.

\section{``Weil-''cohomologies over $\spec \Z$} \label{sec:2}
In several articles we discussed the possibility of a cohomology theory $H^n (\ , \Ch)$ with values in $\C$-vector spaces which is defined for all arithmetic schemes over $\spec \Z$ and their arithmetic compactifications. The theory should be equipped with functorial endomorphisms $\theta$ whose eigenvalues would be the zeroes and poles of the corresponding zeta functions. In particular the cohomology groups can be infinite dimensional. For arithmetic schemes over $\spec \F_p$ such a theory can be constructed by a formal process starting with $l$-adic cohomology, c.f. \cite[section 4]{D7}. The question arises whether a theory of this type can have real coefficients. For arithmetic schemes which are not flat over spec $\Z$ the arguments in \cite[4.7]{D1} make this improbable but for flat schemes over $\spec \Z$ real coefficients remained a theoretical possibility. In the following, we will explain why no real-valued theory of the type conjectured in \cite{D2}, \cite{D3} can exist, even for spectra of number rings. On the other hand, a theory with values in $K_{\R}$ refining the conjectural $\C$-theory seems plausible. We begin by recalling some of the discussion from \cite{D2}, \cite{D3}. Let $k / \Q$ be a number field and let $\overline{\spec \eo_k}$ be the arithmetic compactification of $\spec \eo_k$. The basic expected properties of $H^n (\overline{\spec \eo_k} , \Ch)$ and its operator $\theta$ are the following: 

\begin{punkt} \label{2.1} \rm $\theta$ is a derivation with respect to cup-product. \end{punkt}
\begin{punkt} \label{2.2} \rm $H^0 (\overline{\spec \eo_k} , \Ch) = \C$ with $\theta = 0$. \end{punkt}
\begin{punkt} \label{2.3} \rm $H^1 (\overline{\spec \eo_k} , \Ch)$ is infinite dimensional with the eigenvalues of $\theta$ being the non-trivial zeroes of $\zeta_k (s)$ with their multiplicities. Here the eigenvalues $\alpha$ are counted with their algebraic multiplicities $\dim H^1 (\overline{\spec \eo_k} , \Ch)^{\theta \sim \alpha}$. Here 
\[
H^{\theta \sim \alpha} = \{h \in H \mid (\theta - \alpha)^n (h) = 0 \quad \text{for some} \; n \ge 1 \} 
\]
is the generalized $\alpha$-eigenspace of $\theta$. 
\end{punkt}
\begin{punkt} \rm \label{2.4} There is a $\theta$-equivariant trace isomorphism
\[
\tr : H^2 (\overline{\spec \eo_k} , \Ch) \silo \C (-1) \quad \text{where} \; \C (-1) = \C \; \text{with} \; \theta = \id \; .
\]
\end{punkt}
By \ref{2.1} and \ref{2.4}, for $h_1 , h_2 \in H^1 (\overline{\spec \eo_k} , \Ch)$ we would have
\begin{punkt} \label{2.5} \quad
$h_1 \cup h_2 = \theta (h_1 \cup h_2) = \theta h_1 \cup h_2 + h_1 \cup \theta h_2 \; .$
\end{punkt}
For $h_1 \in H^1 (\overline{\spec \eo_k} , \Ch)^{\theta \sim \alpha}$ and $h_2 \in H^1 (\overline{\spec \eo_k} , \Ch)^{\theta \sim \beta}$ we have
\[
h_1 \cup h_2 \in H^2 (\overline{\spec \eo_k} , \Ch)^{\theta \sim \alpha + \beta} \; .
\]
Hence $h_1 \cup h_2 \neq 0$ can only happen by \ref{2.4} if $\alpha + \beta = 1$ and we expect that:
\begin{punkt} \rm \label{2.6} the pairing
\[
\cup : H^1 (\overline{\spec \eo_k} , \Ch)^{\theta \sim \alpha} \times H^1 (\overline{\spec \eo_k} , \Ch)^{\theta \sim 1 - \alpha} \xrightarrow{\cup} H^2 (\overline{\spec \eo_k} , \Ch) \overset{\tr}{\silo} \C
\]
is perfect. In particular, $\alpha$ is an eigenvalue of $\theta$ on $H^1$ if and only if $1 - \alpha$ is an eigenvalue and the (algebraic) multiplicities are equal. Note that via \ref{2.3} this is compatible with the functional equation of $\zeta_k (s)$. 
\end{punkt} 

Much more optimistically, one could hope for the following property.

\begin{punkt} \rm \label{2.7}
There is a canononical anti-linear automorphism $\ast$ of $H^1 (\overline{\spec \eo_k} , \Ch)$ with $\ast^2 = - \id , \ast \verk \theta = \theta \verk \ast$ and such that we get a scalar product by setting
\[
\langle h , h' \rangle = \tr (h \cup \ast h') \quad \text{on} \; H^1 (\overline{\spec \eo_k} ,\Ch) \; .
\]
\end{punkt}
In particular $\alpha$ is then an eigenvalue of $\theta$ on $H^1$ if and only if $\oalpha$ is an eigenvalue. Note that \ref{2.5} and \ref{2.7} imply the relation 
\[
\langle h, h' \rangle = \langle \theta h , h' \rangle + \langle h , \theta h' \rangle \; .
\]
Hence $\theta - \halb$ would be skew-symmetric on $H^1 (\overline{\spec \eo_k} , \Ch)$ and its eigenvalues therefore purely imaginary. Using \ref{2.3} the Riemann hypotheses would follow. Of course this argument follows the lead of Serre \cite{S}. Also note that if \ref{2.7} is true, skew-symmetry of $\theta - \halb$ would imply that the algebraic eigenspaces $H^{\theta \sim \alpha}$ of $\theta$ are the usual ones $H^{\theta = \alpha}$. 

The following heuristic argument suggests that property \ref{2.7} may be more than whishful thinking. In \cite{D8} we showed that higher dimensional analogues of \ref{2.7} together with certain Abel-Jacobi isomorphisms would imply Beilinson's conjectures on the positivity of height pairings. In particular, suitably normalized the N\'eron-Tate height pairing on $E (\Q)$ for an elliptic curve $E$ over $\Q$ would be the scalar product as in \ref{2.7} restricted to the $1$-eigenspace of $\theta$ on the relevant cohomology group. We expect an Abel-Jacobi isomorphism to identify $E (\Q) \otimes \C$ with this $1$-eigenspace. Thus the same scalar product on cohomology whose positivity would imply that all zeroes of $L (E,s)$ lie on the line $\RRe s = 1$ would also imply that the N\'eron-Tate height pairing on $E (\Q)$ is positive  definite, c.f. \cite{D8} for the details. 

For $\overline{\spec \eo_k}$ there is no natural known candidate of a $\C$-vector space which might be isomorphic to $H^1 (\overline{\spec \eo_k})^{\theta \sim \halb}$ via an Abel-Jacobi map. We will discuss this intriguing problem below.

As a particular case of \ref{2.3} we would get the formula
\begin{punkt} \label{2.8} \quad 
$\ord_{s = 1/2} \, \zeta_k (s) = \dim_{\C} H^1 (\overline{\spec \eo_k} , \Ch)^{\theta \sim \halb} \; .$
\end{punkt}
Note that if an antilinear involution $\ast$ on $H^1 (\overline{\spec \eo_k} , \Ch)$ exists with $\ast^2 = - \id$ and $\ast \verk \theta = \theta \verk \ast$ then $V_k = H^1 (\overline{\spec \eo_k} , \Ch)^{\theta \sim \halb}$ carries a quaternionic structure and hence it has even $\C$-dimension. Since the order of $\zeta_k (s)$ at $s = 1/2$ is even by the functional equation, this is compatible with \eqref{2.8}. Alternatively, the existence of the perfect alternating pairing \ref{2.6} for $\alpha = 1/2$
\begin{punkt}
\label{2.9} \quad 
$\cup : V_k \times V_k \longrightarrow \C$
\end{punkt}
would also imply that $\dim V_k$ is even.

Let us now assume that $k / \Q$ is Galois with group $G$. By contravariant functoriality of the cohomology ring with derivation $(H^{\hullet} (\overline{\spec \eo_k} , \Ch) ,\cup ,  \theta)$ the group $G$ would act on $H^1 (\overline{\spec \eo_k} , \Ch)$ and in particular on $V_k$. By the following argument kindly pointed out to me by Baptiste Morin, it is natural to assume that $G$ acts trivially on $H^2 (\overline{\spec \eo_k} , \Ch) \cong \C$, so that the pairings \ref{2.6} and in particular \eqref{2.9} would be $G$-invariant. Namely one may expect a natural continuous $G$-equivariant homomorphism from the Arakelov Chow group:
\begin{equation} \label{eq:1}
cl : CH^1 (\overline{\spec \eo_k}) \longrightarrow H^2 (\overline{\spec \eo_k} , \Ch) \; .
\end{equation}
The map $cl$ would send the compact kernel of the Arakelov degree map to zero since $\C$ has no non-trivial compact subgroups. Hence $cl$ would factor over the Arakelov degree map and this would lead to a copy of $\R$ with trivial $G$-action inside of $H^2 (\overline{\spec \eo_k} , \Ch)$. Hence $G$ would act trivially on the entire group.

For an equivalence class $\pi$ of irreducible representations of $G$ over $\C$ let $e_{\pi} \in \C [G]^{\mathrm{opp}} = \End_G \C [G]$ be the corresponding idempotent projecting to the $\pi$-isotypical component of $\C [G]$. For any complex vector space $H$ on which $G$ acts, $e_{\pi}$ defines a projector and we set $H (\pi) = e_{\pi} H$. For a realization $V_{\pi}$ of $\pi$ consider the $G$-equivariant isomorphism:
\[
\Hom_G (V_{\pi} , H) \otimes V_{\pi} \silo H (\pi) \quad \text{with} \; f \otimes v \mapsto f (v) \; .
\]
Since $\theta$ commutes with the $G$-action, it respects both $H^1 (\overline{\spec \eo_k} , \Ch) (\pi)$ and the decomposition
\[
H^1 (\overline{\spec \eo_k} , \Ch) = \bigoplus_{\pi} H^1 (\overline{\spec \eo_k} , \Ch) (\pi) \; .
\]
Consider the $G$-isomorphism:
\[
\Hom_G (V_{\pi} , H^1 (\overline{\spec \eo_k} , \Ch)) \otimes V_{\pi} \silo H^1 (\overline{\spec \eo_k} , \Ch) (\pi) \; .
\]
It is compatible with $\theta$ if $\theta$ acts on the left as $\theta (f \otimes v) = (\theta \verk f) \otimes v$. In accordance with the formula
\[
\zeta_k (s) = \prod_{\pi} L (\C [G] (\pi) , s) = \prod_{\pi} L (V_{\pi} , s)^{\dim V_{\pi}}
\]
we expect:
\begin{punkt}
\rm \label{2.10}
The eigenvalues $\alpha$ of $\theta$ on $\Hom_G (V_{\pi} , H^1 (\overline{\spec \eo_k} , \Ch))$ counted with their algebraic multiplicities are the non-trivial zeroes of the Artin $L$-function $L (V_{\pi} , s)$ counted with multiplicity. 
\end{punkt}
A corresponding assertion in the function field case is true, c.f. \cite[\S\,9]{FQ}.

In particular, we would get
\begin{punkt} \quad
\label{2.11}
$\dim_{\C} \Hom_G (V_{\pi} , H^1 (\overline{\spec \eo_k} , \Ch)^{\theta \sim \alpha}) = \ord_{s = \alpha} L (V_{\pi} , s) \; .$
\end{punkt}
For $\alpha = 1/2$ this amounts to the formula
\begin{punkt} \quad
\label{2.12}
$\dim_{\C} \Hom_G (V_{\pi} , V_k) = \ord_{s = 1/2} L (V_{\pi} , s) \; .$
\end{punkt}

\begin{theorem}
\label{t2.13}
For a number field $k$ there is no functorial real valued cohomology theory $H^{\hullet} (\overline{\spec \eo_k} , \Rh)$ with endomorphism $\theta_{\R}$ such that $H^{\hullet} (\overline{\spec \eo_k} , \Ch) := H^{\hullet} (\overline{\spec \eo_k} , \Rh) \otimes_{\R} \C$ with $\theta := \theta_{\R} \otimes \id$ satisfies property \ref{2.12}.
\end{theorem}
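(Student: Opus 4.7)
The plan is to use a symplectic (quaternionic) Artin representation whose $L$-function has odd order of vanishing at the central point $s = 1/2$, and exploit the fact that an underlying real structure forces the relevant complex multiplicities to be even.

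First, suppose for contradiction that such a real-valued theory $H^{\hullet}(\overline{\spec \eo_k}, \Rh)$ exists for every number field $k$. Fix a Galois extension $k/\Q$ with $G = \Gal(k/\Q)$. By functoriality, $G$ acts on $H^1(\overline{\spec \eo_k}, \Rh)$ by $\R$-linear automorphisms commuting with $\theta_{\R}$. Because $\halb \in \R$ and $\theta_{\R}$ is a real operator, the generalized eigenspace
\[
V_k^{\R} := \{h \in H^1(\overline{\spec \eo_k}, \Rh) \mid (\theta_{\R} - \halb)^n h = 0 \text{ for some } n \geq 1\}
\]
is a real $G$-subrepresentation. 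Since kernels of real operators commute with complexification, one has
\[
V_k := H^1(\overline{\spec \eo_k}, \Ch)^{\theta \sim \halb} = V_k^{\R} \otimes_{\R} \C
\]
as complex $G$-modules.

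Next, apply the trichotomy of irreducible complex $G$-representations by their Frobenius--Schur indicator. If $V_{\pi}$ is irreducible of symplectic (quaternionic) type, then the associated irreducible real $G$-module $W_{\pi}$ satisfies $W_{\pi} \otimes_{\R} \C \cong V_{\pi} \oplus V_{\pi}$. Decomposing $V_k^{\R} = \bigoplus_i W_i^{m_i}$ into real irreducibles and complexifying shows that the multiplicity of $V_{\pi}$ in $V_k$ equals $2m$, where $m$ is the multiplicity of $W_{\pi}$ in $V_k^{\R}$. Hence $\dim_{\C} \Hom_G(V_{\pi}, V_k)$ is even for every symplectic irreducible $\pi$.

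To finish I need a specific pair $(k, \pi)$ for which $\ord_{s = 1/2} L(V_{\pi}, s)$ is odd. Take $k/\Q$ Galois with group $Q_8$, the quaternion group of order $8$, whose unique irreducible $2$-dimensional complex representation $\pi$ is of symplectic type. By classical results of Armitage (and in greater generality Fr\"ohlich and Martinet) on Galois--Gauss sums, one can exhibit such $k$ for which the root number of $L(V_{\pi}, s)$ in its functional equation $s \leftrightarrow 1 - s$ equals $-1$, forcing $\ord_{s = 1/2} L(V_{\pi}, s)$ to be odd. Property \ref{2.12} would then make $\dim_{\C} \Hom_G(V_{\pi}, V_k)$ odd, contradicting the parity established above.

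The principal obstacle is of course the existence of a symplectic Artin representation with root number $-1$: this is not derived in the course of the argument itself, but it is a well-known and classical phenomenon and can simply be cited. Everything else in the argument is formal, relying only on real-linear functoriality, compatibility of generalized eigenspaces with complexification, and the Frobenius--Schur dichotomy.
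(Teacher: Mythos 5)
Your proposal is correct and follows essentially the same route as the paper: pick a Galois $k/\Q$ with $G=Q_8$ and root number $W(\pi)=-1$ for the symplectic $2$-dimensional $\pi$, derive a hypothetical real structure $V_k^{\R}$ on $V_k$, and use the Frobenius--Schur trichotomy (Serre \cite[13.2]{S1}) to show that symplectic irreducibles can only occur with even multiplicity in a complexified real $G$-module, contradicting the odd order of vanishing forced by the functional equation. The only cosmetic difference is the citation (you cite Armitage/Fr\"ohlich--Martinet for the sign $-1$, the paper cites Omar \cite{O}) and that you spell out the compatibility of generalized eigenspaces with complexification, which the paper takes as read.
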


\begin{proof}
Let $k$ be a finite Galois extension of $\Q$ whose Galois group $G$ has an irreducible symplectic representation $V_{\pi}$ with root number $W (\pi) = -1$. For example, there exist quaternion fields i.e. normal extensions of $\Q$ with Galois group the quaternion group $H_8$ with $8$ elements whose Artin $L$-function for the unique non-abelian representation (it is symplectic) has root number $-1$. See \cite{O} for examples of such fields. From the functional equation of $L (V_{\pi} , s)$ it follows that $\ord_{s = 1/2} L (V_{\pi} , s) \ge 1$ is odd. By property \ref{2.12} the representation $V_{\pi}$ occurs with odd multiplicity $\Hom_G (V_{\pi} , V_k)$ in $V_k$. If a theory $H^{\hullet} (\overline{\spec \eo_k} , \Rh)$ as in the theorem exists set 
\[
V^{\R}_k = H^1 (\overline{\spec \eo_k} , \Rh)^{\theta \sim \halb} \; .
\]
Then $V^{\R}_k$ would be a $G$-module with $V_k = V^{\R}_k \otimes_{\R} \C$. and we could write $V^{\R}_k$ as a sum of irreducible representations $W$ of $G$ over $\R$. According to \cite[13.2]{S1} there are three possibilities for $W$. If $W$ has the form $W = \Res V$ for an irreducible non-self dual irreducible representation $V$ over $\C$, then we would have $W \otimes \C = V \oplus \oV$ and since both $V$ and $\oV$ are not self-dual they cannot be symplectic. If $W$ is orthogonal then $W \otimes \C$ is an irreducible orthogonal $\C$-representation and hence not symplectic. If $W$ is symplectic i.e. $W = \Res V$ for a symplectic irreducible representation $V$ over $\C$ then $W \otimes \C = V \oplus V$. It follows that the symplectic representation $V_{\pi}$ occurs with even multiplicity in $V_k$. This is a contradiction.
\end{proof}

I would like to thank one of the referees for suggesting this simplification of my original argument. 

There is thus no real valued ``Weil-''cohomology for arithmetic curves. On the other hand, the operator $\ast$ in \ref{2.7} would give a canonical quaternionic structure on $H^1 (\overline{\spec \eo_k} , \Ch)$ and hence we could view $(H^1 (\overline{\spec \eo_k} , \Ch) , \ast)$ as an object of $K_{\R}$, equal to its degree $1$ part. The groups $H^0$ and $H^2$ being isomorphic to $\C$ by \ref{2.2} and \ref{2.4} must be equal to their degree zero parts if they actually take values in $K_{\R}$. In other words, they must have natural real structures. For $H^2$ the real structure would be the image of the cycle class map \eqref{eq:1} in Morin's heuristic argument above. We noted that $cl$ would have to factorize over the Arakelov degree map onto $\R$. 

Hence, a ``Weil-''cohomology theory with values in $K_{\R}$ is a possibility for arithmetic curves. 

In \cite{D3}, \cite{D4} we discussed analogies between the leafwise (or tangential) cohomology of certain foliated dynamical systems and cohomologies such as the hoped for $H^1 (\overline{\spec \eo_k} , \Ch)$. The operator $\theta$ corresponds to the infinitesimal generator of the induced $\R$-action by the flow on leafwise cohomology. Complex leafwise cohomology always has the real structure given by the cohomology of the real forms. Since such a real structure cannot exist on the cohomologies $H^1 (\overline{\spec \eo_k} , \Ch)$ as we have seen, it follows that the analogy is too simple. There has to be a twisted version of leafwise cohomology with complex coefficients that does not have a natural real structure. This can become relevant since in \cite{D5} we did construct foliated dynamical systems for arithmetic schemes which have some (but not all) the expected properties.

We will now state the expected properties of the spaces $V_k = H^1 (\overline{\spec \eo_k} , \Ch)^{\theta \sim 1/2}$ and suggest the problem to find a direct number theoretical construction of the $V_k$. This refines the discussion in \cite[\S\,7]{D9}. We need the 
following conjecture which is due to Serre (unpublished). Sarnak and Rubinstein also arrived at this conjecture in a wider context on p. 195 of \cite{RS}. Previously Chowla had conjectured that the Dirichlet $L$-series of quadratic characters do not vanish at $s = 1/2$. 

{\bf Vanishing Conjecture} {\it For an irreducible complex representation of the Galois group of $\Q$ the corresponding Artin $L$-function has a zero at $s = 1/2$ if and only if the representation is symplectic with root number $-1$. In this case the order of vanishing is one.} 

If the root number is $-1$, the functional equation implies that the Artin $L$-function vanishes at $s = 1/2$. For orthogonal irreducible representations it is known that the root number is $+1$ by \cite{FQ}. Hence there is no a priori reason why their $L$-functions should vanish at $s = 1/2$. For certain quaternion fields the vanishing conjecture has been verified numerically in \cite{O}.

If $k / \Q$ is Galois, it follows from \ref{2.12} and the vanishing conjecture that precisely the symplectic $\pi$'s with $W (\pi) = -1$ among the irreducible representations of $G$ are realized in the $G$-module $V_k$ and each of them with multiplicity one. Thus, as $\C [G]$-modules we should have
\begin{punkt} \quad
\label{2.13}
$V_k = \bigoplus_{\pi \, \text{\rm sympl} \atop W (\pi) = -1} V_k (\pi)$ \quad {\rm and} \quad $V_k (\pi) \cong V_{\pi}$.
\end{punkt}
Since the $V_k (\pi)$ are irreducible self dual and pairwise non-isomorphic, it would follow that the cup-product on $V_k$ gives the (up to scalar) unique $G$-invariant symplectic pairing on $V_k (\pi)$. 

All together we get the following suggestion:

\begin{punkt} \quad \label{2.14}
There should be a covariant functor $k \mapsto (V_k , \cup)$ from the category of number fields $k$ to the category of finite dimensional complex vector spaces $V_k$ with a non-degenerate alternating form $\cup : V_k \times V_k \to \C$. If $k$ is Galois over $\Q$ with group $G$, the $\C [G]$-module structure on $V_k$ resulting from functoriality should be the one described in \ref{2.13}. In particular, we should have
\[
\dim_{\C} V_k = \sum_{\pi \, \text{sympl} \atop W (\pi) = -1} \dim V_{\pi} \; .
\]
Moreover, we expect the symplectic vector spaces $(V_k , \cup)$ to be equipped with an antilinear isomorphism $\ast : V_k \to V_k$ with $\ast^2 = -1$ such that we obtain a scalar product by setting 
\[
\langle v,w \rangle = v \cup \ast w \quad \text{for} \; v,w \in V_k \; .
\]
Like height pairings, the symplectic form $\cup$ should be a sum of local contributions.\end{punkt}

In case a natural candidate for $V_k$ can be found, one may also expect a formula for the leading coefficient of $\zeta_k (s)$ at $s = 1/2$ in the style of the Birch Swinnerton-Dyer or Lichtenbaum Weil-\'etale conjectures. From the point of view of the Beilinson conjectures, $V_k$ would be the $1/2$-eigenspace of the Adams operators on $K_0 (k)$, or the $CH^{1/2}$ group of $\spec k$ or the group of extensions $\Ext^1 (\C (0) , \C (1/2))$ in some $\C$-linear category of ``motives'' over $k$ where a $1/2$-twist of the Tate motive exists. Classically all these objects do not make sense in the number field case. In the function field case the $H^1$ of a supersingular elliptic curve over a finite field can be used as a kind of $1/2$-Tate twist, c.f.\cite{R}. In the Galois module theory of rings of integers in tamely ramified extensions $k / \Q$, symplectic representations with root number $-1$ play a vital role. Also, exponential motives come to mind because they allow non-integral twists.

{\bf Acknowledgements.} I would like to thank the referees for their suggestions to simplify and improve this note. I am also grateful to Luca Barbieri Viale, Werner Bley, James Milne, Baptiste Morin, Peter Sarnak and Jean-Pierre Serre for valuable mathematical and historical comments. Many thanks go to Umberto Zannier for the invitation to the SNS in Pisa, where the first version of this note was written.


\end{document}